\newcommand{\MM}{\mathcal{M}}
\newcommand{\EE}{\mathscr{E}}
\newcommand{\Cfield}{\mathbb{C}}
\newcommand{\rank}{\textnormal{rank}\,}
\newcommand{\Ext}{\textnormal{Ext}}
\newcommand{\arinj}{\ar@{^{(}->}}
\newcommand{\arsurj}{\ar@{->>}}
\newcommand{\MMs}{\mathcal{M}^{\sigma ss}}
\newcommand{\MMm}{\mathcal{M}^{\mu ss}}
\newcommand{\MMml}{\mathcal{M}^{\mu ss, lf}}
\newcommand{\MMss}{\mathcal{M}^{ss}}
\newtheorem{theorem}{Theorem}[section]
\newtheorem{lemma}[theorem]{Lemma}
\newtheorem{proposition}[theorem]{Proposition}
\newtheorem{coro}[theorem]{Corollary}
\theoremstyle{definition}
\theoremstyle{remark}
\newtheorem{remark}[theorem]{Remark}
\numberwithin{equation}{section}
\begin{document}

\title{On some moduli of complexes on K3 surfaces}

\author[Jason Lo]{Jason Lo}
\address{Department of Mathematics, University of Missouri,
Columbia, MO 65211, USA} \email{locc@missouri.edu}

\keywords{Bridgeland stability, moduli stack, K3 surface, Fourier-Mukai transform.}
\subjclass[2010]{Primary 14J60; Secondary: 14D20, 14D23, 14J28}

\maketitle

\begin{abstract}
We consider moduli stacks of Bridgeland semistable objects that previously had only set-theoretic identifications with Uhlenbeck compactification spaces in \cite{LQ}.  On a K3 surface $X$, we give examples where such a moduli stack  is isomorphic to a moduli stack of slope semistable locally free sheaves on the Fourier-Mukai partner $\hat{X}$.  This yields a morphism from the stack of Bridgeland semistable objects to a projective  scheme, which induces an injection on closed points.  It also allows us to extend a theorem of Bruzzo-Maciocia on Hilbert schemes to a statement on moduli of complexes.
\end{abstract}


\section{Introduction}

Fourier-Mukai transforms have been used extensively to understand relations between various moduli spaces of sheaves.  More recently, they have also been used to understand  relations between moduli of complexes  and moduli of sheaves.  For example, Bernardara-Hein considered complexes on elliptic surfaces \cite{BH}, and Hein-Ploog had examples on elliptic K3 surfaces \cite{HP}.  Maciocia-Meachan considered rank-one Bridgeland stable objects on Abelian surfaces \cite{MM}, while Minamide-Yanagida-Yoshioka considered the relations between Bridgeland semistable complexes and Gieseker semistable sheaves on Abelian and K3 surfaces in \cite{MYY, MYY2}.

In \cite[Theorem 5.4]{LQ}, it was shown that, on a polarised smooth projective surface $(X,\omega)$, if the rank $r$ and degree $c_1\omega$ of the objects are coprime and the slope $c_1\omega/r$ of the objects is not equal to $\beta \omega$,  then all the polynomial semistable objects are stable, and the moduli stack of polynomial stable objects is  isomorphic to the moduli scheme of $\mu_\omega$-stable sheaves; here, $\beta \in \text{Num}(X)_{\mathbb Q}$ is another parameter in the definition of polynomial stability.  And when $\mu_\omega$ is equal to $\beta \omega$ (part (iii) of \cite[Theorem 5.4]{LQ}), it was only shown that there is a set-theoretic bijection between the closed points of the moduli of polynomial semistable objects, and the Uhlenbeck compactification space of $\mu_\omega$-stable locally free sheaves.  In this case, the polynomial semistable objects coincide with the Bridgeland semistable objects of phase 1.

In this paper, we consider the latter case above, i.e.\ we consider the moduli of Bridgeland semistable objects of slope  $\mu_\omega = \beta \omega$.   If $X$ is a K3 surface, we show that the moduli $\MMs$ of these objects is isomorphic to a moduli of $\mu$-semistable locally free sheaves on the Fourier-Mukai partner of $X$ under an extra assumption (Theorem \ref{theorem1}).  This yields a morphism of stacks from $\MMs$ to a projective scheme, which induces an injection on the closed points (Corollary \ref{coro1}).  This way, we obtain an algebro-geometric alternative to the statement in part (iii) of \cite[Theorem 5.4]{LQ}.

Theorem \ref{theorem1} also extends a theorem of Bruzzo-Maciocia's \cite[Theorem 0.1]{BM}: they constructed  an isomorphism between the Hilbert scheme of points and a moduli space of Gieseker stable locally free sheaves on a reflexive K3 surface, which now extends to an isomorphism between a moduli stack of Bridgeland semistable objects and a moduli stack of $\mu$-semistable locally free sheaves on any K3 surface.  We explain this along with the details of other consequences of Theorem \ref{theorem1} in Section \ref{section-app}.

The main ingredients in our proof of Theorem \ref{theorem1} are Huybrechts' results on derived equivalent K3 surfaces \cite{Huy06}.

\subsection{Preliminaries}

For any scheme $X$, we write $D(X)$ to denote the bounded derived category of coherent sheaves on $X$.  For an object $E \in D(X)$, we write $H^i(E)$ to denote the degree-$i$ cohomology of $E$.

If $(X,\omega)$ is a polarised smooth projective variety and $E \in D(X)$, we write $\mu_\omega (E)$ to denote $c_1(E)\omega/\rank (E)$.  Furthermore, if $\beta \in \text{Num}(X)_\mathbb{Q}$, then we write $\mathcal A (\beta,\omega)$ to denote the Abelian subcategory of $D(X)$ consisting of 2-term complexes $E \in D(X)$ such that:
\begin{itemize}
\item all the Harder-Narasimhan factors of $H^0(E)$ have slope $\mu_\omega > \beta \omega$;
\item $H^{-1}(E)$ is torsion-free, and all its Harder-Narasimhan factors have slope $\mu_\omega \leq \beta \omega$;
\item $H^i(E)=0$ for all $i \neq -1, 0$.
\end{itemize}
The Abelian category $\mathcal A (\beta,\omega)$ is the heart of a bounded t-structure on $D(X)$ that is obtained from the standard t-structure from tilting.

The only moduli of polynomial semistable objects discussed in the rest of this paper is as in part (iii) of \cite[Theorem 5.4]{LQ}.  In this case, polynomial stability coincides with a Bridgeland stability $\sigma$ on $D(X)$, and the polynomial semistable objects are exactly the $\sigma$-semistable objects of phase 1  in $\mathcal A (\beta,\omega)$.  Therefore, from now on, we will refer to these semistable objects only as Bridgeland semistable objects.  Note that $\beta, \omega$ are part of the definition of the polynomial/Bridgeland stability here.    And for fixed $\beta, \omega$, the central charge on the heart $\mathcal A(\beta, \omega)$ that we use is
\begin{align}
  Z (E) &:= - \int_X e^{-(\beta+i\omega)} \cdot ch(E) \text{\quad for $E \in \mathcal A (\beta, \omega)$} \notag\\
  &= \text{rk}(E)\frac{\omega^2}{2} + i (c_1(E)\omega - \text{rk}(E)\beta \omega) + c(E) \label{eq12}
\end{align}
where $c(E) := -ch_2(E)+c_1(E)\beta - \text{rk}(E) \beta^2/2$.  The phase of a nonzero object $E \in \mathcal A(\beta, \omega)$ is defined to be the real number $\phi (E) \in (0,1]$ satisfying
\[
  Z(E) \in \mathbb{R}_{>0}\cdot e^{i \phi (E)}.
\]
From \eqref{eq12}, it is clear that an object $E \in \mathcal A (\beta, \omega)$ of positive rank and of slope $\mu_{\omega} (E) := c_1(E)\omega/\text{rk}(E) = \beta \omega$ has phase 1 (maximal phase).  The reader may refer to \cite[Section 2]{LQ} for details of the definitions of these stability conditions.

All the schemes will be over $k=\Cfield$.

\subsection{Acknowledgments}
The author would like to thank Arend Bayer, Dan Edidin, Zhenbo Qin and Ziyu Zhang for  helpful discussions.

\section{K3 surfaces}\label{section-K3}

Throughout this section, suppose $X$ and $\hat{X}$ are two non-isomorphic, derived equivalent K3 surfaces.  By \cite[Proposition 4.1]{Huy06}, $\hat{X}$ is isomorphic to a fine moduli space of $\mu_\omega$-stable locally free sheaves on $X$, with respect to some ample class $\omega \in \text{Num}(X)_\mathbb{Q}$.   Suppose the $\mu_\omega$-stable sheaves parametrised by $\hat{X}$ have slope $\mu_\omega =\beta \omega$ for some $\beta \in \text{Num}(X)_\mathbb{Q}$.    If we let  $\EE$ denote the universal family on $X \times \hat{X}$, then the Fourier-Mukai transform
\[
\Psi : D(X) \overset{\thicksim}{\to} D(\hat{X})
 \]
 with kernel $\EE$  induces an equivalence from the Abelian category $\mathcal A (\beta, \omega)$ to another Abelian category $\mathcal A(\hat{\beta}, \hat{\omega})$ for some $\hat{\beta}, \hat{\omega} \in \text{Num}(\hat{X})_{\mathbb{Q}}$ where $\hat{\omega}$ is ample \cite[Proposition 5.2]{Huy06}.

Let $\sigma, \hat{\sigma}$ be the Bridgeland stabilities on $X, \hat{X}$ defined using $\beta, \omega$ and $\hat{\beta}, \hat{\omega}$, respectively.

\begin{lemma}\label{lemma3}
The Fourier-Mukai transform $\Psi$ takes a $\sigma$-semistable object in $\mathcal A (\beta,\omega)$ of slope $\mu_\omega (E) = \beta \omega$ to a $\hat{\sigma}$-semistable object in $\mathcal A (\hat{\beta},\hat{\omega})$ of slope $\mu_{\hat{\omega}} = \hat{\beta} \hat{\omega}$.
\end{lemma}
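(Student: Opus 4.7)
\emph{Proof strategy.} The plan is to show that $\Psi$ preserves not just the hearts but also the phases of objects, from which the result will follow almost formally: the slope condition $\mu_\omega(E) = \beta\omega$ corresponds (for $E$ of positive rank) to the maximal phase $\phi(E) = 1$ in $\mathcal{A}(\beta,\omega)$, and a phase-$1$ object is automatically $\sigma$-semistable because no nonzero subobject in the heart can exceed the maximal phase.

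By Huybrechts \cite[Proposition 5.2]{Huy06}, $\Psi$ already restricts to an equivalence of abelian categories $\mathcal{A}(\beta,\omega) \xrightarrow{\sim} \mathcal{A}(\hat\beta,\hat\omega)$, so $\Psi E \in \mathcal{A}(\hat\beta,\hat\omega)$ is automatic. The key step is to show that the two central charges match up to a positive real scalar, i.e.\ $\hat Z(\Psi E) \in \mathbb{R}_{>0}\cdot Z(E)$. To do this, I would rewrite $Z(E) = -\int_X e^{-(\beta+i\omega)}\mathrm{ch}(E)$ in terms of the Mukai pairing and the Mukai vector $v(E)$ on the Mukai lattice of $X$ (using $\sqrt{\mathrm{td}_X} = (1,0,1)$ on a K3 surface), and similarly for $\hat Z$ on $\hat X$. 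Since the Fourier-Mukai transform $\Psi$ induces a Hodge isometry $\Psi^H$ between the Mukai lattices of $X$ and $\hat X$ that intertwines $v(E)$ with $v(\Psi E)$, and since Huybrechts' choice of $(\hat\beta,\hat\omega)$ is made precisely so that $\Psi^H$ sends $e^{\beta+i\omega}$ to a positive real multiple of $e^{\hat\beta+i\hat\omega}$, the two central charges then agree up to such a scalar.

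From the explicit formula \eqref{eq12}, the condition $\mu_\omega(E) = \beta\omega$ is equivalent to $\mathrm{Im}\,Z(E) = 0$, i.e.\ $\phi(E) = 1$. By the central charge compatibility just established, $\mathrm{Im}\,\hat Z(\Psi E) = 0$ as well, which translates directly to $\mu_{\hat\omega}(\Psi E) = \hat\beta\hat\omega$. Finally, for any nonzero subobject $F \hookrightarrow \Psi E$ in $\mathcal{A}(\hat\beta,\hat\omega)$, we have $\phi(F) \leq 1 = \phi(\Psi E)$ since phases lie in $(0,1]$, so $\Psi E$ is $\hat\sigma$-semistable. The main obstacle is the central-charge compatibility step—specifically, extracting from Huybrechts' construction the precise identification of $(\beta,\omega)$ with $(\hat\beta,\hat\omega)$ and reformulating the two central charges through the Mukai pairing so that the Hodge isometry $\Psi^H$ can be applied to transport one into the other.
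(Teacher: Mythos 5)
Your proposal takes a genuinely different route from the paper's. The paper argues structurally inside the heart: by \cite[Lemma 4.2(c)]{BayerPBSC}, a $\sigma$-semistable $E$ of slope $\beta\omega$ has $H^{-1}(E)$ $\mu_\omega$-semistable of slope $\beta\omega$ and $H^0(E)$ zero-dimensional; combining the Jordan--H\"older filtration of $H^{-1}(E)$ with the double-dual sequences, $E$ is exhibited as a finite extension of the minimal objects of $\Abo$ (skyscrapers and $F[1]$ with $F$ $\mu_\omega$-stable locally free of slope $\beta\omega$). Since an exact equivalence of hearts preserves minimal objects, $\Psi(E)$ is a finite extension of minimal objects of $\mathcal A(\hat\beta,\hat\omega)$, hence semistable of slope $\hat\beta\hat\omega$. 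You instead transport the central charge through the Mukai lattice and reduce everything to the observation that $\mu_\omega(E)=\beta\omega$ is the phase-$1$ condition and phase-$1$ objects are automatically semistable. That last observation is correct (and in fact shows the semistability hypothesis on $E$ is redundant given the slope hypothesis), and the underlying mechanism is the same one the paper uses implicitly when it asserts that extensions of minimal objects are semistable of slope $\hat\beta\hat\omega$. What your route would buy, if completed, is independence from the structure theory; what it loses is that the structural decomposition is needed anyway for Proposition \ref{pro1}, where one must identify $\Psi(E)$ as a shifted locally free sheaf, not merely compute its phase.

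There are, however, two real gaps in your key step. First, the central charge \eqref{eq12} used in this paper is $-\int_X e^{-(\beta+i\omega)}\mathrm{ch}(E)$ \emph{without} the factor $\sqrt{\mathrm{td}_X}=(1,0,1)$, so it is not the Mukai-pairing charge $\langle e^{\beta+i\omega}, v(E)\rangle$: the two differ by the real summand $-\mathrm{rk}(E)$. Hence your claimed relation $\hat Z(\Psi E)\in\mathbb{R}_{>0}\cdot Z(E)$ is not what a Hodge-isometry argument delivers for \emph{these} charges; at best the imaginary parts are proportional. This happens to suffice for your conclusion (you only need $\mathrm{Im}\,\hat Z(\Psi E)=0$, and $\mathrm{Im}\,Z$ agrees with $\mathrm{Im}\,\langle e^{\beta+i\omega},v(-)\rangle$), but the step must be reformulated in terms of imaginary parts, not full proportionality. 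Second, the assertion that $\Psi^H(e^{\beta+i\omega})$ is a $\mathbb{C}^\ast$-multiple of $e^{\hat\beta+i\hat\omega}$ (with the scalar then forced to be positive real by the matching of hearts) is the entire content of your argument, and you leave it as an acknowledged obstacle rather than extracting it from \cite{Huy06}; the paper cites \cite[Proposition 5.2]{Huy06} only for the equivalence of hearts, so this compatibility of periods would need to be verified separately. You should also note that concluding $\mu_{\hat\omega}(\Psi E)=\hat\beta\hat\omega$ from $\mathrm{Im}\,\hat Z(\Psi E)=0$ requires checking $\mathrm{rk}(\Psi E)\neq 0$. The paper's proof avoids all of these lattice-theoretic issues by never leaving the abelian category.
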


\begin{proof}
Take any $\sigma$-semistable object $E \in \mathcal A (\beta,\omega)$ of slope $\mu_\omega (E)=\beta\omega$ (so $H^{-1}(E)$ is necessarily nonzero).  From \cite[Lemma 4.2(c)]{BayerPBSC}, we know $H^{-1}(E)$ is $\mu_\omega$-semistable of slope $\beta \omega$ and $H^0(E)$ is a 0-dimensional sheaf.

Consider the Jordan-H\"{o}lder filtration of $H^{-1}(E)$ with respect to $\mu_\omega$-stability:
\begin{gather}\label{eq9}
  0 \subsetneq F_1 \subsetneq F_2 \subsetneq \cdots \subsetneq F_m = H^{-1}(E),
\end{gather}
where each $G_i := F_i/F_{i-1}$ is $\mu_\omega$-stable with slope $\beta \omega$.  And then, for each $i$, we have the short exact sequence of coherent sheaves
\begin{equation}\label{eqn-extension}
  0 \to G_i \to G_i^{\ast \ast} \to T_i \to 0
\end{equation}
where $G_i^{\ast \ast}$ is a $\mu_\omega$-stable locally free sheaf of slope $\beta \omega$, and $T_i$ is a 0-dimensional sheaf $T_i$; this gives an exact triangle
\begin{equation}\label{eq8}
T_i \to G_i [1] \to G_i^{\ast \ast}[1] \to T_i [1]
\end{equation}
for each $i$.

By \cite[Proposition 2.2]{Huy06}, the minimal objects in $\mathcal A (\beta,\omega)$ are of the following forms:
\begin{itemize}
\item the skyscraper sheaf $k(x)$ at a closed point  $x \in X$, or
\item $F[1]$ where $F$ is a $\mu_\omega$-stable locally free sheaf with $\mu_\omega (F)=\beta \omega$.
\end{itemize}
Therefore, using the canonical exact triangle
\[
  H^{-1}(E)[1] \to E \to H^0(E) \to H^{-1}(E)[2]
\]
in $D(X)$, together with the filtration \eqref{eq9} and the exact triangles \eqref{eq8}, we can construct $E$ as a series of extensions by minimal objects in $\mathcal A (\beta,\omega)$.  Since $\Psi$ induces an equivalence between $\mathcal A(\beta,\omega)$ and $\mathcal A (\hat{\beta},\hat{\omega})$, it takes minimal objects in  $\mathcal A(\beta,\omega)$ to minimal objects in $\mathcal A (\hat{\beta},\hat{\omega})$.  And any object in $\mathcal A (\hat{\beta},\hat{\omega})$ built from a series of extensions by minimal objects is $\hat{\sigma}$-semistable with slope $\mu_{\hat{\omega}}=\hat{\beta}\hat{\omega}$. This completes the proof of the lemma.
\end{proof}

Note that, for any $y \in \hat{X}$, the Chern character of the fibre $\EE_y$ of the universal family $\EE$  is independent of $y$.

\begin{proposition}\label{pro1}
For any $\sigma$-semistable object $E \in \mathcal A(\beta,\omega)$ of Chern character $ch=(-r,-\delta,n)$, with $r>0$ and satisfying:
\begin{itemize}
\item[(a)] $\delta \omega /r = \beta \omega$, i.e.\ the $\mu_\omega$-slope of $E$ is $\beta \omega$, and
\item[(b)] for any of the Jordan-H\"{o}lder factor $G$ of $H^{-1}(E)$ with respect to $\mu_\omega$-stability, we have $ch(G^\ast) \neq ch(\EE_y)$,
\end{itemize}
the transform $\Psi (E)$ is a $\mu_{\hat{\omega}}$-semistable locally free sheaf of slope $\mu_{\hat{\omega}} = \hat{\beta}\hat{\omega}$.
\end{proposition}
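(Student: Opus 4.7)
The plan is to build on Lemma \ref{lemma3}, which already gives that $\Psi(E)\in \mathcal{A}(\hat{\beta},\hat{\omega})$ is $\hat{\sigma}$-semistable of slope $\mu_{\hat{\omega}}=\hat{\beta}\hat{\omega}$.  What remains is to upgrade this to the assertion that $\Psi(E)$ has the form $\hat{F}[1]$ for a $\mu_{\hat{\omega}}$-semistable locally free sheaf $\hat{F}$ on $\hat{X}$.  Since the minimal objects in $\mathcal{A}(\hat{\beta},\hat{\omega})$ are, by \cite[Proposition 2.2]{Huy06}, either skyscrapers $k(\hat{y})$ or shifts $\hat{F}'[1]$ of $\mu_{\hat{\omega}}$-stable locally free sheaves on $\hat X$, the task reduces to ruling out any skyscraper Jordan--H\"{o}lder factor in $\Psi(E)$ and then showing that the iterated extension of the remaining minimals is itself a shifted locally free sheaf.

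I would begin from the explicit filtration of $E$ produced in the proof of Lemma \ref{lemma3}: $E$ is built as a successive extension in $\mathcal{A}(\beta,\omega)$ out of skyscrapers $k(x)$ (contributed by $H^{0}(E)$ and by the 0-dimensional cokernels $T_{i}$ in $0\to G_{i}\to G_{i}^{\ast\ast}\to T_{i}\to 0$) together with the shifted reflexive hulls $G_{i}^{\ast\ast}[1]$ of the $\mu_{\omega}$-Jordan--H\"{o}lder factors $G_{i}$ of $H^{-1}(E)$.  Applying the equivalence $\Psi$ transports this into a corresponding decomposition of $\Psi(E)$ by minimals in $\mathcal{A}(\hat{\beta},\hat{\omega})$.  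The Fourier--Mukai dictionary for derived equivalent K3 surfaces (\cite[Section 5]{Huy06}) describes how $\Psi$ interchanges the two types of minimals using the universal family $\EE$: every skyscraper $k(x)$ on $X$ is sent to a shifted $\mu_{\hat{\omega}}$-stable locally free sheaf on $\hat X$, while a type-(b) minimal $F[1]$ becomes a skyscraper on $\hat X$ exactly when $F$ is one of the locally free sheaves parametrised by $\hat X$; this last possibility can be detected at the level of Chern characters, since $\hat X$ is a fine moduli space and all such sheaves have the same Chern character as $\EE_{y}$.

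Condition (b) in the proposition is tailored precisely to forbid this: it certifies that none of the $G_{i}^{\ast\ast}[1]$ is sent to a skyscraper under $\Psi$, so that $H^{0}(\Psi(E))=0$.  Hence $\Psi(E)=\hat{F}[1]$, where $\hat{F}$ arises as an iterated extension in $\Coh(\hat{X})$ of $\mu_{\hat{\omega}}$-stable locally free sheaves; on the smooth surface $\hat X$, any such extension is again locally free (the relevant $\EExt^{1}$-sheaves vanish because each quotient is locally free).  The $\hat{\sigma}$-semistability of $\hat{F}[1]$ at phase $1$ then translates into the desired $\mu_{\hat{\omega}}$-semistability of $\hat{F}$ of slope $\hat{\beta}\hat{\omega}$, finishing the argument.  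The step requiring the most care, I expect, is making the minimals-to-minimals dictionary precise -- in particular, tracking the shifts in $\Psi$ and matching the Chern character condition on $G^{\ast}$ against $ch(\EE_{y})$ in the correct convention so as to identify exactly which $\mu_{\omega}$-stable locally free sheaves on $X$ are sent to skyscrapers on $\hat X$ -- after which the rest of the proof is essentially bookkeeping.
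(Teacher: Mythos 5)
Your proposal follows essentially the same route as the paper: decompose $E$ into minimal objects of $\mathcal{A}(\beta,\omega)$ (skyscrapers from $H^0(E)$ and the $T_i$, together with the $G_i^{\ast\ast}[1]$), use condition (b) and Huybrechts' description of the transform on $\mu_\omega$-stable locally free sheaves (the paper cites \cite[Proposition 7.1]{Huy06}) to see that every piece is sent to a shifted $\mu_{\hat{\omega}}$-stable locally free sheaf of slope $\hat{\beta}\hat{\omega}$, and conclude that the iterated extension is a shifted $\mu_{\hat{\omega}}$-semistable locally free sheaf. The argument is correct; the paper deduces semistability directly from the extension by stable sheaves of equal slope rather than passing back through $\hat{\sigma}$-semistability, but this is a cosmetic difference.
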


In Lemma \ref{lemma4} below, we give two numerical conditions where condition (b) in Proposition \ref{pro1} is satisfied.

\begin{proof}
We use the same notation as in the proof of Lemma \ref{lemma3}, so that the $G_i$ denote the Jordan-H\"{o}lder factors of $H^{-1}(E)$.  For each $i$, the sheaf $G_i^\ast$ is reflexive, hence locally free, and so $ch_2 (G_i^\ast) = ch_2 (G_i^{\ast \ast})$.  By condition (b) in the hypothesis, none of the $G_i^{\ast \ast}$ is isomorphic to any $\EE_y$.  Since each $G_i^{\ast \ast}$ is a $\mu_\omega$-stable locally free sheaf with slope $\mu_\omega =\beta \omega$, by \cite[Proposition 7.1]{Huy06}, each of the transforms $\Psi (G_i^{\ast \ast})$ is a $\mu_{\hat{\omega}}$-stable locally free sheaf of slope $\hat{\beta}\hat{\omega}$ (up to shift).  On the other hand, every 0-dimensional sheaf on $X$ is mapped by $\Psi$ to a $\mu_{\hat{\omega}}$-stable locally free sheaf of slope $\hat{\beta}\hat{\omega}$.  Therefore, using the exact triangle
\[
 H^{-1}(E)[1] \to E \to H^0(E) \to H^{-1}(E)[2],
\]
along with the filtration \eqref{eq9} and the triangles \eqref{eq8}, we can construct $\Psi (E)$  by a series of extensions of $\mu_{\hat{\omega}}$-stable locally free sheaves of slope $\hat{\beta} \hat{\omega}$, implying $\Psi (E)$ itself is a $\mu_{\hat{\omega}}$-stable locally free sheaves of slope $\hat{\beta} \hat{\omega}$, sitting at degree $-1$.
\end{proof}

\begin{lemma}\label{lemma4}
With the same notation as in Proposition \ref{pro1}, condition (b) of Proposition \ref{pro1} is satisfied when:
\begin{itemize}
\item[(i)] $0 < r  < \rank (\EE_y)$, or
\item[(ii)]
\begin{gather}
ch_2(\EE_y) < -  \left| n + (r-1) \frac{(\delta\omega)^2}{2\omega^2}\right|.\label{eq1}
\end{gather}
\end{itemize}
\end{lemma}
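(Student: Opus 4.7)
The goal is to verify condition (b) of Proposition \ref{pro1}: for every Jordan-H\"{o}lder factor $G_i$ of $H^{-1}(E)$, $ch(G_i^\ast) \neq ch(\EE_y)$. It suffices to exhibit a discrepancy in a single graded piece of the Chern character, and the plan is to use $ch_0$ in case (i) and $ch_2$ in case (ii), bounding each via elementary inequalities on the JH factors.

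For case (i) the argument is short. The proof of Lemma \ref{lemma3} tells us that $H^0(E)$ is $0$-dimensional, so $\rank H^{-1}(E) = r$. In the filtration \eqref{eq9} the ranks $r_i := \rank(G_i)$ satisfy $\sum_i r_i = r$, and since dualising preserves rank we get $ch_0(G_i^\ast) = r_i \leq r < \rank(\EE_y) = ch_0(\EE_y)$.

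For case (ii) the plan is to bound $ch_2(G_i^\ast)$ from below by a quantity that strictly exceeds $ch_2(\EE_y)$, using the room given by \eqref{eq1}. First, applying the Bogomolov inequality to each $\mu_\omega$-stable sheaf $G_j$ gives $ch_2(G_j) \leq c_1(G_j)^2/(2 r_j)$; combining this with the Hodge index inequality $c_1(G_j)^2\,\omega^2 \leq (c_1(G_j)\omega)^2$ (valid because $\omega$ is ample) and the slope identity $c_1(G_j)\omega = r_j \beta\omega$ from hypothesis (a) yields the uniform bound $ch_2(G_j) \leq r_j (\beta\omega)^2/(2\omega^2)$. Second, writing the length of $H^0(E)$ as $\ell \geq 0$ and subtracting Chern characters gives $ch_2(H^{-1}(E)) = \ell - n \geq -n$, so summing the previous bound over $j \neq i$ and using $r_i \geq 1$ yields
\[
ch_2(G_i) \;\geq\; -n - (r - r_i)\frac{(\beta\omega)^2}{2\omega^2} \;\geq\; -n - (r-1)\frac{(\beta\omega)^2}{2\omega^2}.
\]
Third, substituting $\beta\omega = (\delta\omega)/r$, using $1/r^2 \leq 1$, and then using $-x \geq -|x|$ loosens this to $ch_2(G_i) \geq -|n + (r-1)(\delta\omega)^2/(2\omega^2)| > ch_2(\EE_y)$ by hypothesis \eqref{eq1}. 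The passage from $G_i$ to $G_i^\ast$ is harmless: since $G_i^\ast$ is reflexive, hence locally free on the smooth surface $X$, one has $ch_2(G_i^\ast) = ch_2(G_i^{\ast\ast})$, and the sequence \eqref{eqn-extension} with $T_i$ a $0$-dimensional sheaf gives $ch_2(G_i^{\ast\ast}) \geq ch_2(G_i)$; hence $ch_2(G_i^\ast) > ch_2(\EE_y)$ and (b) holds.

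The main obstacle will be simply choosing the right slackness in the chain of inequalities so that the final bound matches the slightly generous expression $|n + (r-1)(\delta\omega)^2/(2\omega^2)|$ in \eqref{eq1} rather than the tighter natural bound involving $(\beta\omega)^2$; the absolute value accommodates both signs of $n+(r-1)(\delta\omega)^2/(2\omega^2)$, and replacing $r^2(\beta\omega)^2$ by $(\delta\omega)^2$ loses nothing we need. No deeper input beyond Bogomolov and the Hodge index theorem is required.
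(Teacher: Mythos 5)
Your proof is correct and follows essentially the same route as the paper: rank comparison for (i), and for (ii) the combination of Bogomolov's inequality, the Hodge index bound, the slope identity, $ch_2(H^0(E))\geq 0$, and $ch_2(G_i^{\ast\ast})\geq ch_2(G_i)$ via the $0$-dimensional quotient $T_i$. Your bookkeeping is in fact slightly cleaner (you keep the tighter bound $r_j(\beta\omega)^2/(2\omega^2)$ and avoid the paper's unnecessary detour through an upper bound on $|g_i|$), but the ideas and the final estimate are the same.
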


\begin{proof}
That the inequality in (i) implies condition (b) is clear.  Now, suppose $ch(E)=(-r,-\delta,n)$ satisfies the inequality \eqref{eq1} in (ii).  Using the same notation as in the proof of Proposition \ref{pro1}, we show  that for each $i$, we have $ch_2(G_i^{\ast \ast}) \neq ch_2 (\EE_y)$:

  For any $i$, write $f_i := c_1(G_i)$ and $g_i := ch_2 (G_i)$.  Then for all $i$, we have $\mu_\omega (H^{-1}(E)) = \mu_\omega (G_i)$, i.e.\ $\delta \omega /r = f_i \omega/\text{rk}(G_i)$, i.e.\ $f_i \omega = \delta \omega \cdot \text{rk}(G_i)/r$.  By \cite[Lemma 3.7(i)]{LQ}, we have
  \[
  f_i^2 \leq \frac{\left(\delta \omega \cdot \frac{\text{rk}(G_i)}{r}\right)^2}{\omega^2} \leq \frac{(\delta \omega)^2}{\omega^2}.
  \]
On the other hand, Bogomolov's inequality on each $G_i$ gives us $g_i \leq f_i^2/(2\, \text{rk} (G_i))$.  So overall, we have
\begin{equation}\label{eq2}
g_i \leq \frac{f_i^2}{2\,\text{rk} (G_i)} \leq \frac{(\delta \omega)^2}{2\omega^2}.
\end{equation}
Also, since $ch_2(H^0(E)) \geq 0$ and $n = -ch_2(H^{-1}(E))+ch_2(H^0(E))$, we have
\begin{equation}\label{eq3}
-n \leq ch_2(H^{-1}(E)) = \sum_{i=1}^m g_i.
\end{equation}

The inequality \eqref{eq2} and equation \eqref{eq3} together give, for all $i$,
\begin{equation*}
  (-|g_i|) + (m-1) \frac{(\delta\omega)^2}{2\omega^2} \geq -n,
\end{equation*}
i.e.\
\[
  |g_i| \leq n + (m-1) \frac{(\delta\omega)^2}{2\omega^2}.
\]
Since $m \leq r$, we have $|g_i| \leq n + (r-1) \frac{(\delta\omega)^2}{2\omega^2}$, hence
\begin{equation}\label{eq4}
 g_i \geq  - \left| n + (r-1) \frac{(\delta\omega)^2}{2\omega^2} \right|
\end{equation}
for all $i$.  Therefore, by the inequality \eqref{eq1}, $ch_2(G_i^\ast)=ch_2 (G_i^{\ast\ast}) \geq ch_2 (G_i) =g_i> ch_2 (\EE_y)$ for any $i$ and $y\in \hat{X}$.  Hence $ch(G_i^{\ast\ast}) \neq ch (\EE_y)$ for any $i$.
\end{proof}

\begin{remark}\label{remark1}
In particular, if $r=1$ (i.e.\ $E$ is of rank 1), or $\delta\omega=0$, then the inequality \eqref{eq1} reduces to $ch_2 (\EE_y) < -|n|$.
\end{remark}

\section{Moduli stacks}\label{section3}

Let $X$ be any smooth projective K3 surface, and let $\sigma$ be a Bridgeland stability as in Section \ref{section-K3}.

%


Lieblich \cite{Lieblich} constructed an Artin stack $\MM$ of objects $E \in D(X)$ satisfying $\Ext^{<0}(E,E)=0$.  We have various open substacks of $\MM$:
\begin{itemize}
\item $\MMs_{X,ch}$, the Artin stack of $\sigma$-semistable objects of Chern character $ch$ on $X$, constructed by Toda \cite{Toda1};
\item $\MMm_{X,ch}$, the Artin stack of $\mu_\omega$-semistable torsion-free sheaves of Chern character $ch$ on $X$;
\item $\MMml_{X,ch}$, the Artin stack of $\mu_\omega$-semistable locally free sheaves of Chern character $ch$ on $X$, which is a substack of $\MMm_{X,ch}$.
\end{itemize}

We have:

 \begin{theorem}\label{theorem1}
Suppose all the objects in $\MMs_{X,ch}$ satisfy the conditions of Proposition \ref{pro1}.  Then the Fourier-Mukai transform $\Psi : D(X) \to D(\hat{X})$ induces an isomorphism of Artin stacks
\[
\Psi_{\MM} :  \MMs_{X,ch} \to \MM^{\mu ss,lf}_{\hat{X},\hat{ch}}
 \]
(where $\hat{ch}$ denotes the image of $ch$ under $\Psi^{ch}$).
 \end{theorem}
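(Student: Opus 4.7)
The plan is to construct $\Psi_{\MM}$ directly from the Fourier--Mukai equivalence $\Psi$, composed with an appropriate shift so that the output lands in degree $0$. Since $\Psi$ preserves the vanishing condition $\Ext^{<0}(E,E)=0$, it induces by \cite{Lieblich} an equivalence of Artin stacks between Lieblich's stack on $X$ and that on $\hat{X}$, functorially in families via the relative FM transform with kernel $\EE$. Both $\MMs_{X,ch}$ and $\MM^{\mu ss,lf}_{\hat{X},\hat{ch}}$ are open substacks of these, so it suffices to show that the FM equivalence (after shift) restricts to an isomorphism between them; by openness, this reduces to checking that the correspondence on closed points is bijective and to handling the extension to $S$-families.

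The forward direction on closed points is exactly Proposition \ref{pro1}: by hypothesis, every object of $\MMs_{X,ch}$ satisfies its conditions, so its FM transform, suitably shifted, is a $\mu_{\hat{\omega}}$-semistable locally free sheaf on $\hat{X}$ of Chern character $\hat{ch}$. For the reverse direction, I would argue symmetrically. Given $\hat{F}\in \MM^{\mu ss, lf}_{\hat{X},\hat{ch}}$, filter $\hat{F}$ by its Jordan--H\"{o}lder factors in the category of $\mu_{\hat{\omega}}$-semistable sheaves of slope $\hat{\beta}\hat{\omega}$; each factor is a $\mu_{\hat{\omega}}$-stable locally free sheaf, hence, after the appropriate shift, a minimal object of $\mathcal A(\hat{\beta},\hat{\omega})$ by the classification in \cite[Proposition 2.2]{Huy06}. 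The quasi-inverse $\hat{\Psi}$ restricts to an equivalence $\mathcal A(\hat{\beta},\hat{\omega})\to \mathcal A(\beta,\omega)$ and sends minimal objects to minimal objects, so $\hat{\Psi}(\hat{F})$ (shifted appropriately) is an iterated extension of minimal objects in $\mathcal A(\beta,\omega)$, hence a $\sigma$-semistable object of phase $1$; its Chern character is $ch$ because $\Psi^{ch}$ is an isomorphism, so it lies in $\MMs_{X,ch}$.

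To upgrade this pointwise bijection to an isomorphism of stacks, I would work out the morphism on $S$-families through the $S$-linear relative FM transform $\Psi_S$. For an $S$-flat family $\mathcal{E}$ of $\sigma$-semistable objects of Chern character $ch$, Proposition \ref{pro1} applied to each derived fibre $\mathcal{E}_s$ shows that $\Psi_S(\mathcal{E})$, suitably shifted, has every derived restriction a locally free sheaf on $\hat{X}$ concentrated in a single cohomological degree, with Chern character $\hat{ch}$. The main obstacle I anticipate is precisely here: one must invoke a cohomology-and-base-change argument to conclude that $\Psi_S(\mathcal{E})$ itself is an $S$-flat family of locally free sheaves on $\hat{X}$, and track that the shift required is uniform across $S$. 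The inverse morphism is built in the same way using $\hat{\Psi}$. Once the families are under control, the isomorphism of stacks is formal from the Lieblich-level equivalence together with the pointwise identification.
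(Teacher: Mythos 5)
Your overall route is the same as the paper's: the forward direction is Proposition \ref{pro1} applied fibrewise, combined with the base-change formula $Lj_s^\ast \Psi_S(E_S) \cong \Psi_s(Lj_s^\ast E_S)$ for the relative transform to get an $S$-flat family of locally free sheaves; the reverse direction uses the quasi-inverse transform; and the isomorphism of stacks is formal once the two mutually inverse morphisms on families exist. (The paper builds the two morphisms on $S$-families directly rather than restricting an isomorphism of the ambient Lieblich stacks to open substacks, but that is a cosmetic difference, and the flatness point you flag as the ``main obstacle'' is exactly what the cited base-change formula together with the fibrewise statements is used for.)

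There is, however, one step in your reverse direction that fails as stated: the Jordan--H\"{o}lder factors of a $\mu_{\hat{\omega}}$-semistable \emph{locally free} sheaf $\hat{F}$ with respect to $\mu$-stability need not themselves be locally free, so they need not be shifts of minimal objects of $\mathcal A(\hat{\beta},\hat{\omega})$. For instance, a rank-two locally free extension $0 \to \OO_{\hat{X}} \to \hat{F} \to I_Z \to 0$ coming from the Serre construction is $\mu$-semistable with the non-locally-free factor $I_Z$; the classification in \cite[Proposition 2.2]{Huy06} only lists $F[1]$ for $F$ $\mu$-stable \emph{and locally free}, together with skyscrapers. The repair is exactly the device already used in Lemma \ref{lemma3} and Proposition \ref{pro1}: for each factor $G_i$ pass to the double dual via $0 \to G_i \to G_i^{\ast\ast} \to T_i \to 0$ with $T_i$ zero-dimensional, and use the resulting triangles to exhibit $\hat{F}[1]$ as an iterated extension of the minimal objects $G_i^{\ast\ast}[1]$ and the simple quotients of the $T_i$. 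The paper sidesteps redoing this by simply invoking Lemma \ref{lemma3} in the symmetric direction (from $\hat{X}$ to $X$), whose proof contains the double-dual step; with that substitution your argument closes up and agrees with the paper's.
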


Let us set up some notation for the proof of the theorem.  The equivalence $\Psi : D(X) \to D(\hat{X})$ has a quasi-inverse $\Phi : D(\hat{X}) \to D(X)$ that is also a Fourier-Mukai transform, with kernel $\EE^\ast [1]$.  For any scheme $S \to k$, we write $X_S$ to denote $X \times_k S$.  Then we have relative Fourier-Mukai transforms $\Psi_S : D(X_S) \to D(\hat{X}_S)$ and $\Phi_S : D(\hat{X}_S) \to D(X_S)$ induced by $\Psi, \Phi$, respectively.  For any closed point $s \in S$, we write $X_s$ to denote the fibre of $X_S$ over $s$, write $j_s$ to denote the closed immersion $X_s \hookrightarrow X_S$ or $\hat{X}_s \hookrightarrow \hat{X}_S$, and write $\Psi_s, \Phi_s$ to denote the Fourier-Mukai transforms between the derived categories of $X_s, \hat{X}_s$ induced by $\Psi_S,\Phi_S$, respectively.  See \cite[Section 1.2.1]{FMNT} for more on relatively Fourier-Mukai transforms.

\begin{proof}
Take any scheme $S$ over $k$.  Let $E_S$ be an $S$-flat family of $\sigma$-semistable objects on $X$ of Chern character $ch$.  By the base change formula \cite[(1.15)]{FMNT}
\begin{gather}\label{eq10}
  Lj_s^\ast \Psi_S (E_S) \cong \Psi_s (Lj_s^\ast E_S)
\end{gather}
and Proposition \ref{pro1}, we see that  $\Psi_S (E_S)$ is an $S$-flat family of $\mu$-semistable locally free sheaves on $\hat{X}$.  This induces a morphism of stacks $\Psi_\MM : \MMs_{X,ch} \to  \MM^{\mu ss,lf}_{\hat{X},\hat{ch}}$.

Conversely, suppose that $F_S$ is an $S$-flat family of $\mu$-semistable locally free sheaves of Chern character $\hat{ch}$ on $\hat{X}$.   Since each fibre of $F_S$ over $S$ is a $\hat{\sigma}$-semistable object of slope $\mu_{\hat{\omega}}=\hat{\beta}\hat{\omega}$, by Lemma \ref{lemma3} and the base change formula above (both assertions are symmetric in $X$ and $\hat{X}$), we see that $\Phi_S$ takes $F_S$ to an $S$-flat family of $\sigma$-semistable objects of Chern character $ch$.  This induces a morphism of stacks $\Phi_\MM :  \MM^{\mu ss,lf}_{\hat{X},\hat{ch}} \to \MMs_{X,ch}$.  Since $\Psi$ and $\Phi$ are quasi-inverses to each other, so are $\Psi_S$ and $\Phi_S$.  Hence $\Psi_\MM$ is an isomorphism of stacks.
%
%
\end{proof}

Given any smooth curve $C$ on $\hat{X}$, let $\MMss_{C,(r,d)}$ denote the moduli stack of $\mu$-semistable (equivalently, Gieseker semistable) torsion-free sheaves on $C$ of rank $r$ and degree $d$.  We know that $\MMss_{C,(r,d)}$ admits a good  moduli space
\[
  \gamma : \MMss_{C,(r,d)} \to M^{ ss}_{C,(r,d)}
\]
in the sense of Alper, where $M^{ss}_{C,(r,d)}$ is a projective scheme \cite[Example 8.7]{Alper}.  Recall that two closed points of $\MMss_{C,(r,d)}$ are mapped to the same point of $M^{ss}_{C,(r,d)}$ if and only if they represent $S$-equivalent semistable sheaves on $C$.

Also, for any smooth curve $C$ on $\hat{X}$, we can  define a morphism of moduli stacks
\[
\alpha : \MMml_{\hat{X},\hat{ch}} \to \MMss_{C,(r,d)}
 \]
 as follows: given any scheme $S$ over $k$ and any object $F_S \in \MMml_{\hat{X},\hat{ch}} (S) \subset D(\hat{X}_S)$, let $\alpha$ send $F_S$ to the object $F_S |_{C \times S}$; here, we need $r=\hat{ch}_0$ and $d = \hat{ch}_1 \cdot [C]$.

Since the moduli stack $\MMml_{\hat{X},\hat{ch}}$ is bounded, by \cite[Theorem 8.2.12]{HL}, if the smooth curve $C$ above further satisfies the requirement $C \in |m\hat{\omega}|$ for $m \gg 0$ depending on $\hat{ch}$, then two $\mu$-semistable locally free sheaves $F_1, F_2$ of Chern character $\hat{ch}$ on $\hat{X}$ are isomorphic if and only if their restrictions $F_1 |_C, F_2|_C$ are $S$-equivalent.  That is, the composition
\[
\gamma \alpha : \MMml_{\hat{X},\hat{ch}} \to \MMss_{C,(r,d)} \to M^{ss}_{C,(r,d)}
\]
 induces an injection on the sets of closed points.  Pre-composing $\gamma\alpha$ with the morphism $\Psi_\MM$, we obtain:

 \begin{coro}\label{coro1}
When the objects in $\MMs_{X,ch}$ satisfy the conditions of Proposition \ref{pro1},  and $C \in |m\hat{\omega}|$ is a smooth curve with $m \gg 0$ depending on $\hat{ch}$, we have a morphism of stacks
 \[
\gamma \alpha \Psi_\MM  : \MMs_{X,ch} \to M^{ss}_{C,(r,d)}
 \]
 from the Artin stack of Bridgeland semistable objects $\MMs_{X,ch}$ to the projective scheme $M^{ss}_{C,(r,d)}$ of semistable torsion-free sheaves on $C$, that induces an injection on the sets of closed points.
 \end{coro}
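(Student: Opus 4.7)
The plan is to obtain the morphism simply by composing the three morphisms $\Psi_\MM$, $\alpha$, and $\gamma$ that were constructed in the paragraphs immediately preceding the corollary, and then to verify injectivity on closed points via a short diagram chase.

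First I would observe that the existence of the morphism is immediate: Theorem \ref{theorem1} provides the isomorphism of Artin stacks $\Psi_\MM : \MMs_{X,ch} \to \MM^{\mu ss,lf}_{\hat X,\hat{ch}}$, the restriction construction yields $\alpha : \MMml_{\hat X,\hat{ch}} \to \MMss_{C,(r,d)}$ (using $r=\hat{ch}_0$ and $d=\hat{ch}_1\cdot [C]$), and Alper's result quoted from \cite[Example 8.7]{Alper} provides the good moduli morphism $\gamma : \MMss_{C,(r,d)} \to M^{ss}_{C,(r,d)}$. Composing gives the desired morphism $\gamma\alpha\Psi_\MM$ to the projective scheme $M^{ss}_{C,(r,d)}$.

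For injectivity on closed points, I would argue as follows. Let $E_1, E_2$ be two closed points of $\MMs_{X,ch}$, i.e.\ $\sigma$-semistable objects in $\mathcal A(\beta,\omega)$ of Chern character $ch$, and suppose they map to the same closed point of $M^{ss}_{C,(r,d)}$. Writing $F_i := \Psi(E_i) \in \MMml_{\hat X,\hat{ch}}$, which are $\mu_{\hat\omega}$-semistable locally free sheaves by Proposition \ref{pro1}, the hypothesis means that $F_1|_C$ and $F_2|_C$ are $S$-equivalent as semistable sheaves on $C$ (since $\gamma$ identifies $S$-equivalent points). By the boundedness of $\MMml_{\hat X,\hat{ch}}$ together with the restriction theorem \cite[Theorem 8.2.12]{HL} applied to a sufficiently large $m$, $S$-equivalence of the restrictions forces $F_1 \cong F_2$ as $\mu_{\hat\omega}$-semistable locally free sheaves on $\hat X$. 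Since $\Psi_\MM$ is an isomorphism of stacks by Theorem \ref{theorem1}, we conclude that $E_1 \cong E_2$, i.e.\ they determine the same closed point of $\MMs_{X,ch}$.

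The argument is essentially a concatenation of results already established, so there is no substantive obstacle; the only subtle point is to invoke the correct form of \cite[Theorem 8.2.12]{HL}, namely that for $m$ large enough depending on $\hat{ch}$, $S$-equivalence is detected on a generic smooth curve in $|m\hat\omega|$, and this is exactly the hypothesis imposed in the statement of the corollary.
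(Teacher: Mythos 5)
Your proposal is correct and matches the paper's argument exactly: the paper likewise obtains the morphism by composing $\gamma\alpha$ with $\Psi_\MM$ and deduces injectivity on closed points from the isomorphism of Theorem \ref{theorem1} together with the fact (via boundedness and \cite[Theorem 8.2.12]{HL}) that $S$-equivalence of restrictions to $C$ detects isomorphism of $\mu$-semistable locally free sheaves of Chern character $\hat{ch}$. No gaps.
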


\section{Consequences and Applications}\label{section-app}

 In  part (iii) of \cite[Theorem 5.4]{LQ}, it was shown that there is a set-theoretic bijection between the closed points of the moduli stack $\MMs_{X,ch}$ from Section \ref{section3}, and the closed points of the Uhlenbeck compactification space of $\mu$-stable locally free sheaves on $X$; this required the Bridgeland semistable objects to have rank and degree that are coprime.   Corollary \ref{coro1} gives us a morphism from the stack $\MMs_{X,ch}$ to the scheme $M^{ss}_{C,(r,d)}$, which induces an injection of the closed points, thereby giving us an algebro-geometric alternative to the statement of part (iii) of \cite[Theorem 5.4]{LQ}, and without the coprime assumption.

Parts (i) and (ii) of \cite[Theorem 5.4]{LQ} say that, whenever the rank and degree of the objects are coprime, the moduli stack of polynomial semistable objects of Chern character $ch$ on a K3 surface is always isomorphic to a moduli scheme of  stable sheaves.  Note that, parts (i) and (ii) of \cite[Theorem 5.4]{LQ} can be read as statements on isomorphic moduli stacks, for if each fibre of a  family of complexes is isomorphic to a sheaf at degree 0, then the family itself is isomorphic to a flat family of sheaves at degree 0; also, derived dual and shift are both functors that take families of complexes to families of complexes.  Together with Theorem \ref{theorem1}, we now know that under the coprime assumption on rank and degree, the moduli stack of polynomial semistable objects must be  isomorphic to either a moduli scheme of  stable sheaves, or a moduli stack of $\mu$-semistable locally free sheaves.

Also, the proof of Theorem \ref{theorem1} in fact shows, that every $\mu_{\hat{\omega}}$-semistable torsion-free sheaf on $\hat{X}$ of Chern character $\hat{ch}$ is the image of a $\sigma$-semistable object on $X$ under $\Psi$.  In other words, there are no non-locally free $\mu_{\hat{\omega}}$-semistable sheaves  of Chern character $\hat{ch}$ on $\hat{X}$.  Since the moduli stack of $\mu_{\hat{\omega}}$-semistable  sheaves is universally closed,  we conclude that the moduli stack $\MMs_{X,ch}$ in Theorem \ref{theorem1} is also universally closed.

Lastly, we point out that Theorem \ref{theorem1} can be considered as an extension of a theorem of Bruzzo-Maciocia: in \cite[Theorem 0.1]{BM}, for a reflexive K3 surface $X$, they constructed an isomorphism from the Hilbert scheme $\text{Hilb}^n(X)$ of $n$ points on $X$ to the moduli space of Gieseker stable locally free sheaves on the Fourier-Mukai partner $\hat{X}$ when $n \geq 1$.  Choosing $\beta \omega =0$ and using ideal sheaves, we can consider $\text{Hilb}^n(X)$ as a substack of the moduli stack of Bridgeland semistable objects of Chern character $(-1,0,n)$.  This way, our Theorem \ref{theorem1} extends Bruzzo-Maciocia's isomorphism by adding $\mu$-semistable locally free sheaves on the $\hat{X}$ side (also see Remark \ref{remark1}).

\end{document}